\documentclass[12pt]{amsart}
\usepackage{hyperref}
\usepackage{mathabx}
\usepackage{mathdots}

\hyphenation{Ha-da-mard}

\textwidth=160mm
\textheight=200mm
\topmargin=20mm
\hoffset=-20mm

\newtheorem{theorem}{Theorem}[section]
\newtheorem{definition}[theorem]{Definition}
\newtheorem{proposition}[theorem]{Proposition}
\newtheorem{lemma}[theorem]{Lemma}

\newtheorem{question}[theorem]{Question}

\begin{document}

\title{Counting results for thin Butson matrices}

\author{Teo Banica}
\address{T.B.: Department of Mathematics, Cergy-Pontoise University, 95000 Cergy-Pontoise, France. {\tt teo.banica@gmail.com}}

\subjclass[2000]{05B20}
\keywords{Hadamard matrix, Butson matrix}

\begin{abstract}
A partial Butson matrix is a matrix $H\in M_{M\times N}(\mathbb Z_q)$ having its rows pairwise orthogonal, where $\mathbb Z_q\subset\mathbb C^\times$ is the group of $q$-th roots of unity. We investigate here the counting problem for these matrices in the ``thin'' regime, where $M=2,3,\ldots$ is small, and where $N\to\infty$ (subject to the condition $N\in p\mathbb N$ when $q=p^k>2$). The proofs are inspired from the de Launey-Levin and Richmond-Shallit counting results.
\end{abstract}

\maketitle

\section*{Introduction}

A partial Hadamard matrix is a matrix $H\in M_{M\times N}(\pm1)$ having its rows pairwise orthogonal. These matrices are quite interesting objects, appearing in connection with various questions in combinatorics. The motivating examples are the Hadamard matrices $H\in M_N(\pm1)$, and their $M\times N$ submatrices, with $M\leq N$. See \cite{sya}.

A given partial Hadamard matrix $H\in M_{M\times N}(\pm1)$ can complete or not into an Hadamard matrix $\widetilde{H}\in M_N(\pm1)$. It is known since Hall \cite{hal} and Verheiden \cite{ver} that this automatically happens when $K=N-M$ is small, and more precisely when $K\leq7$.

The structure of such matrices is very simple up to $M=4$, where, up to assuming that the first row has 1 entries only, and then permuting the columns, the matrix is:
$$H=\begin{pmatrix}
+&+&+&+&+&+&+&+\\
+&+&+&+&-&-&-&-\\
+&+&-&-&+&+&-&-\\
\underbrace{+}_a&\underbrace{-}_b&\underbrace{+}_b&\underbrace{-}_a&\underbrace{+}_b&\underbrace{-}_a&\underbrace{+}_a&\underbrace{-}_b
\end{pmatrix}$$

Here $a,b\in\mathbb N$ are subject to the condition $a+b=N/4$.

At $M\geq5$ no such result is available, and the partial Hadamard matrices give rise to interesting combinatorial structures, related to the Hadamard Conjecture. See Ito \cite{ito}.

In their breakthrough paper \cite{ll2}, following some previous work in \cite{ll1}, de Launey and Levin proposed a whole new point of view on these matrices, in the asymptotic limit $N\in 4\mathbb N$, $N\to\infty$. Their main result is as follows:

\medskip

\noindent {\bf Theorem (de Launey-Levin \cite{ll2}).} {\em The probability for a random $H\in M_{M\times N}(\pm1)$ to be partial Hadamard is
$$P_M\simeq\frac{2^{(M-1)^2}}{\sqrt{(2\pi N)^{\binom{M}{2}}}}$$
in the $N\in 4\mathbb N$, $N\to\infty$ limit.}

\medskip

The proof in \cite{ll2} uses a random walk interpretation of the partial Hadamard matrices, then the Fourier inversion formula, and then some real analysis methods. Importantly, as pointed out there, this method can be probably used for more general situations.

An interesting generalization of the Hadamard matrices are the complex Hadamard matrices $H\in M_N(\mathbb C)$ having as entries the roots of unity, introduced by Butson in \cite{but}. The basic example here is the Fourier matrix, $F_N=(w^{ij})$ with $w=e^{2\pi i/N}$:
$$F_N=\begin{pmatrix}
1&1&1&\ldots&1\\
1&w&w^2&\ldots&w^{N-1}\\
\ldots&\ldots&\ldots&\ldots&\ldots\\
1&w^{N-1}&w^{2(N-1)}&\ldots&w^{(N-1)^2}
\end{pmatrix}$$

In general, the theory of Butson matrices can be regarded as a ``non-standard'' branch of discrete Fourier analysis. For a number of results on these matrices, see \cite{tzy}.

We can of course talk about partial Buston matrices:

\medskip

\noindent {\bf Definition.} {\em A partial Butson matrix is a matrix $H\in M_{M\times N}(\mathbb Z_q)$ having its rows pairwise orthogonal, where $\mathbb Z_q\subset\mathbb C^\times$ is the group of $q$-roots of unity.}

\medskip

Observe that at $q=2$ we obtain the partial Hadamard matrices. In general, the interest comes from the Butson matrices $H\in M_N(\mathbb Z_q)$, and from their $M\times N$ submatrices.

Let us first discuss the case $q=2^k$. At $M=2$, up to assuming that the first row has 1 entries only, and then permuting the columns, the matrix must be as follows:
$$H=\begin{pmatrix}
1&1&\ldots&1&1&1&\ldots&1\\
\underbrace{1}_{a_1}&\underbrace{w}_{a_2}&\ldots&\underbrace{w^{q/2-1}}_{a_{q/2}}&\underbrace{w^{q/2}}_{a_1}&\underbrace{w^{q/2+1}}_{a_2}&\ldots&\underbrace{w^{q-1}}_{a_{q/2}}
\end{pmatrix}$$

Here $w=e^{2\pi i/q}$ and $a_1,\ldots,a_{q/2}\in\mathbb N$ are certain multiplicities, summing up to $N/2$. Thus counting such objects is the same as counting abelian squares, i.e. length $N$ words of type $xx'$ where $x'$ is a permutation of $x$.  According now to \cite{rsh}, we have:

\medskip

\noindent {\bf Theorem (cf. Richmond-Shallit \cite{rsh}).} {\em For $q=2^k$ the probability for a randomly chosen $H\in M_{2\times N}(\mathbb Z_q)$ to be partial Butson is
$$P_2\simeq2\sqrt{\left(\frac{q/2}{2\pi N}\right)^{q/2}}$$
in the $N\in 2\mathbb N$, $N\to\infty$ limit.}

\medskip

There are actually several proofs of this result, but the one in \cite{rsh} is remarkably beautiful: based only on the Stirling formula, and on an old idea of Lagrange. Indeed:
$$P_2=\frac{1}{q^N}\binom{N}{N/2}\sum_{a_1+\ldots+a_{q/2}=N/2}\binom{N/2}{a_1,\ldots,a_{q/2}}^2$$

The point now is that the sum on the right can be estimated by making a clever use of the Stirling formula, and this gives the above result. See \cite{rsh}.

Summarizing, there are several techniques for dealing with the counting problem for partial Butson matrices. In this paper we will try to use and mix these techniques. Our first result here will be an extension of the Richmond-Shallit count:

\medskip

\noindent {\bf Theorem A.} {\em When $q=p^k$ is a prime power, the probability for a randomly chosen $H\in M_{2\times N}(\mathbb Z_q)$, with $N\in p\mathbb N$, $N\to\infty$, to be partial Butson is:
$$P_2\simeq\sqrt{\frac{p^{2-\frac{q}{p}}q^{q-\frac{q}{p}}}{(2\pi N)^{q-\frac{q}{p}}}}$$
In particular, for $q=p$ prime, $P_2\simeq\sqrt{\frac{p^p}{(2\pi N)^{p-1}}}$.}

\medskip

When $q\in\mathbb N$ is not a prime power the combinatorics is much more complicated, as shown by Lam and Leung in \cite{lle}. Particularly problematic is the case where $q$ has 3 prime factors, because the vanishing sums of $q$-roots of unity won't necessarily decompose as sums of cycles. Here is such a ``tricky'' vanishing sum, with $w=e^{2\pi i/30}$:
$$w^5+w^6+w^{12}+w^{18}+w^{24}+w^{25}=0$$

Our second result will concern the case where $q$ has two prime factors. If we call ``dephased'' the matrices having the first row consisting of 1 entries only, we have:

\medskip

\noindent {\bf Theorem B.} {\em For $q=p_1^{k_1}p_2^{k_2}$ with $p_1,p_2$ distinct primes, the dephased partial Butson matrices $H\in M_{2\times N}(\mathbb Z_q)$ are indexed by matrices $A\in M_{p_1^{k_1}\times p_2^{k_2}}(\mathbb N)$ of the following form, with indices $i\in\mathbb Z_{p_1}$, $j\in\mathbb Z_{p_1^{k_1-1}}$, $x\in\mathbb Z_{p_2}$, $y\in\mathbb Z_{p_2^{k_2-1}}$, and with $B_{ijy},C_{jxy}\in\mathbb N$:
$$A_{ij,xy}=B_{ijy}+C_{jxy}$$
In particular at $q=2p$ with $p\geq3$ prime, $P_2$ equals the probability for a random walk on $\mathbb Z^p$ to end up on the diagonal, i.e. at a position of type $(t,\ldots,t)$, with $t\in\mathbb Z$.}

\medskip

As already mentioned, the general case $q=p_1^{k_1}\ldots p_s^{k_s}$ is certainly more complicated. One way of avoiding the difficulties would be by imposing the ``regularity'' assumption from \cite{bbs}. But the matrices $A$ as above will become $s$-arrays, and we have no results.

Finally, at $M=3$, and when $q=p$ is prime, the partial Butson matrices are related to the matrices  $A\in M_p(\mathbb N)$ which are ``tristochastic'', in the sense that the sums on the rows, columns and diagonals are all equal. We will prove the following result:

\medskip

\noindent {\bf Theorem C.} {\em At $q=p$ prime, the dephased partial Butson matrices $H\in M_{3\times N}(\mathbb Z_q)$ are indexed by the tristochastic matrices $A\in M_p(\mathbb N)$, with sum $N/p$. In particular at $p=3$ we have $P_3\simeq\frac{243\sqrt{3}}{(2\pi N)^3}$, in the $N\in 3\mathbb N$, $N\to\infty$ limit.}

\medskip

We can see from the above results that the counting problem depends a lot on $q,M$. We believe that an extension of \cite{ll2} should require assuming that $q=p$ is prime.

The paper is organized as follows: 1 is a preliminary section, in 2-4 we state and prove our main results, and 5 contains some further results, and a few concluding remarks.

\section{Partial Hadamard matrices}

Let $H\in M_{M\times N}(\pm1)$ be a partial Hadamard matrix (PHM). We will usually dephase $H$, i.e. assume that the first row consists of $1$ entries only, then put it in ``standard form'', with the $+$ entries moved to the left as much as possible, by proceeding from top to bottom. Here are some examples, at small values of $M$:

\begin{proposition}
The standard form of dephased PHM at $M=2,3,4$ is
$$H=\begin{pmatrix}+&+\\\underbrace{+}_{N/2}&\underbrace{-}_{N/2}\end{pmatrix}\quad 
H=\begin{pmatrix}+&+&+&+\\+&+&-&-\\\underbrace{+}_{N/4}&\underbrace{-}_{N/4}&\underbrace{+}_{N/4}&\underbrace{-}_{N/4}\end{pmatrix}$$
$$H=\begin{pmatrix}
+&+&+&+&+&+&+&+\\
+&+&+&+&-&-&-&-\\
+&+&-&-&+&+&-&-\\
\underbrace{+}_a&\underbrace{-}_b&\underbrace{+}_b&\underbrace{-}_a&\underbrace{+}_b&\underbrace{-}_a&\underbrace{+}_a&\underbrace{-}_b
\end{pmatrix}$$
where at $M=4$ the numbers $a,b\in\mathbb N$ satisfy $a+b=N/4$.
\end{proposition}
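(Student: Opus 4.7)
The plan is to proceed by induction on $M$, using at each step the orthogonality of the newly placed row with the ones already fixed in order to pin down the distribution of $+$ entries among the column-blocks that have been stabilized by the earlier rows. The key point behind ``standard form'' is that at the $k$-th step the columns split into equivalence classes on which rows $1,\ldots,k-1$ are all constant, and one is still allowed to permute columns inside each of these classes to move the $+$ entries of row $k$ to the left.

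For $M=2$ the claim reduces immediately to the observation that orthogonality of row $2$ against the constant row $1$ forces exactly $N/2$ entries of each sign, and standard form orders them with $+$'s first.

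Next, for $M=3$, I would split the columns into the two blocks $B_{\pm}$ of size $N/2$ on which row $2$ is constantly $\pm$, introduce the number $c_{\pm}$ of $+$ entries of row $3$ in $B_{\pm}$, and read off from the orthogonality with rows $1,2$ the system $c_+ + c_- = N/2$ and $c_+ - c_- = 0$. This forces $c_+ = c_- = N/4$, and reordering each block to put the $+$'s first yields the displayed pattern.

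For $M=4$, which is the substantive case, I would partition the columns into the four blocks of size $N/4$ indexed by the sign pattern of rows $2,3$, in the order $(+,+),(+,-),(-,+),(-,-)$, and let $n_1,n_2,n_3,n_4$ denote the numbers of $+$ entries of row $4$ in these blocks. The orthogonality of row $4$ with rows $1,2,3$ translates into the three linear relations
$$n_1+n_2+n_3+n_4=N/2,\quad n_1+n_2=n_3+n_4,\quad n_1+n_3=n_2+n_4,$$
which force $n_1=n_4$ and $n_2=n_3$; setting $a=n_1$ and $b=N/4-a$ then gives the pattern claimed, once the $+$'s of row $4$ are moved to the left within each of the four blocks. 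The calculation itself is routine linear algebra; the only delicate point is making sure that the column permutations used are admissible, i.e.\ internal to each block of columns on which the previously placed rows are already constant, so that the forms already achieved for the earlier rows are preserved.
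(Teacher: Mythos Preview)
Your proposal is correct and follows exactly the approach sketched in the paper: put the matrix in standard form and then solve the orthogonality equations in terms of the block-wise $+$-counts of the last row. The paper simply asserts this in one line, whereas you have written out the linear systems explicitly for each $M$; there is no substantive difference.
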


\begin{proof}
All the results follow by putting the matrix in standard form, and then writing down the orthogonality equations in terms of the block entries in the last row.
\end{proof}

Let us try now to count the partial Hadamard matrices $H\in M_{M\times N}(\pm1)$. This is an easy task at $M=2,3,4$, where the answer is:

\begin{proposition}
The number of PHM at $M=2,3,4$ is
\begin{eqnarray*}
\#PHM_{2\times N}&=&2^N\binom{N}{N/2}\\
\#PHM_{3\times N}&=&2^N\binom{N}{N/4,N/4,N/4,N/4}\\
\#PHM_{4\times N}&=&2^N\sum_{a+b=N/4}\binom{N}{a,b,b,a,b,a,a,b}
\end{eqnarray*}
where the quantities on the right are multinomial coefficients.
\end{proposition}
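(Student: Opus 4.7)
The plan is to count partial Hadamard matrices by first reducing to the dephased case, and then reading off the block multiplicities from Proposition 1.1.

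First, I would observe that multiplying any column of an $M\times N$ sign matrix by $-1$ preserves the orthogonality of rows. Consequently, the map
\[ (\text{dephased PHM},\ \varepsilon\in\{\pm1\}^N) \longmapsto \mathrm{diag}(\varepsilon)\text{-column-flipped PHM} \]
is a bijection between pairs (dephased PHM, column-sign vector) and arbitrary PHMs: given a PHM $H$, the choice $\varepsilon_j=H_{1j}$ uniquely produces a dephased PHM, and the procedure is clearly reversible. This supplies the universal factor $2^N$ in each of the three formulas, and reduces the problem to counting dephased PHMs at $M=2,3,4$.

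Next, I would classify the columns of a dephased PHM $H\in M_{M\times N}(\pm1)$ according to the sign pattern of entries in rows $2,\ldots,M$. There are $2^{M-1}$ possible patterns, and a dephased PHM is entirely specified by (i) the multiplicity $n_\sigma$ of each pattern $\sigma\in\{\pm1\}^{M-1}$ and (ii) the assignment of the $N$ columns to patterns. Given multiplicities $(n_\sigma)$, the number of assignments is the multinomial coefficient $\binom{N}{n_{\sigma_1},\ldots,n_{\sigma_{2^{M-1}}}}$. Thus the main remaining task is to determine which multiplicity vectors $(n_\sigma)$ arise. But this is exactly the content of Proposition 1.1: at $M=2$ orthogonality of rows $1,2$ forces $n_+=n_-=N/2$; at $M=3$ the three row–row orthogonality equations force each of the four multiplicities to equal $N/4$; and at $M=4$ the standard form shows that the eight multiplicities take the symmetric pattern $(a,b,b,a,b,a,a,b)$ with $a+b=N/4$, the parameter $a$ being free.

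Plugging these multiplicity vectors into the multinomial count and multiplying by $2^N$ gives the three stated formulas; at $M=4$ one sums over the free parameter $a$. The only non-routine point is the identification of the allowed multiplicity vectors, which is handled by the earlier Proposition 1.1, so no real obstacle remains — the proof is essentially a bookkeeping translation of Proposition 1.1 into multinomial coefficients, together with the $2^N$ factor coming from the dephasing bijection.
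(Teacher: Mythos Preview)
Your proof is correct and follows exactly the same approach as the paper: count the dephased PHM via the multinomial coefficients coming from the column-multiplicity classification in Proposition~1.1, and then multiply by $2^N$ for the dephasing bijection. The paper's proof compresses all of this into a single sentence, so your version is simply a more detailed exposition of the same argument.
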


\begin{proof}
Indeed, the multinomial coefficients at right count the matrices having the first row consisting of 1 entries only, and the $2^N$ factor comes from this.
\end{proof}

At $M\geq 5$ no such simple formula is available, and estimating rather than exactly computing looks like a more reasonable objective. First, we have:

\begin{proposition}
The probability for a random $H\in M_{M\times N}(\pm1)$ to be PHM is
$$P_2\simeq\frac{2}{\sqrt{2\pi N}},\quad P_3\simeq\frac{16}{\sqrt{(2\pi N)^3}},\quad P_4\simeq\frac{512}{(2\pi N)^3}$$
in the $N\in2\mathbb N$ (resp. $N\in4\mathbb N$, $N\in4\mathbb N$), $N\to\infty$ limit.
\end{proposition}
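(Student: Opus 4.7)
The plan is to divide the exact counts in Proposition 1.2 by $2^{MN}$ and apply Stirling's formula. The cases $M=2,3$ are completely routine. For $M=2$, the classical central binomial asymptotic $\binom{N}{N/2}\simeq 2^N\sqrt{2/(\pi N)}$ gives $P_2 = \binom{N}{N/2}/2^N \simeq 2/\sqrt{2\pi N}$. For $M=3$, I would use the equal-parts formula
\[
\binom{N}{N/k,\ldots,N/k} \simeq \frac{k^N\sqrt{2\pi N}}{(2\pi N/k)^{k/2}},
\]
which follows directly from Stirling; plugging in $k=4$ and dividing by $4^N$ produces $P_3 \simeq 16/(2\pi N)^{3/2}$.

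The case $M=4$ is the genuine work. Here I would reparameterize the sum over $a+b=N/4$ by $a = N/8+t$, $b=N/8-t$, so that the summand is maximized at $t=0$. Using the Stirling expansion
\[
\log((x+t)!) + \log((x-t)!) - 2\log(x!) = \frac{t^2}{x} + O\left(\frac{t^4}{x^3}\right),
\]
applied with $x=N/8$ to each of the four pairs of $a$'s and $b$'s appearing in the multinomial, the coefficient $\binom{N}{a,b,b,a,b,a,a,b}$ factors as its peak value times $\exp(-32t^2/N)$, with error terms that are negligible on the effective support $|t| = O(\sqrt{N})$. The peak value $\binom{N}{N/8,\ldots,N/8}$ is handled by the equal-parts formula with $k=8$, giving $\simeq 4096\sqrt{2\pi N}\,8^N/(2\pi N)^4$. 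Approximating the discrete sum by the Gaussian integral $\int_{-\infty}^{\infty} e^{-32 t^2/N}\,dt = \sqrt{\pi N/32}$, combining the constants, and dividing by $8^N$ yields $P_4 \simeq 512/(2\pi N)^3$, as desired.

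The main obstacle is the $M=4$ Laplace estimate: one has to control the cubic and quartic Taylor remainders (of order $1/N$ on the effective support $|t|=O(\sqrt{N})$), and the error from replacing a discrete sum by an integral (small compared to the leading Gaussian width $\sim\sqrt{N}$). These are standard tail bounds, handled in exactly the spirit of the Richmond--Shallit calculation recalled in the introduction, and no further difficulty is expected.
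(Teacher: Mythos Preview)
Your proposal is correct and follows essentially the same approach as the paper: Stirling for $M=2,3$ and a Laplace/Gaussian estimate for the $M=4$ sum. The paper organizes the $M=4$ step slightly differently, first factoring
\[
\binom{N}{a,b,b,a,b,a,a,b}=\binom{N}{N/4,\ldots,N/4}\binom{N/4}{a,b}^4
\]
and then invoking Lemma~2.4 with $s=2$, $p=4$, $n=N/4$ for the remaining sum $\sum_{a}\binom{N/4}{a}^4$; but this is exactly your Laplace computation in a different packaging.
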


\begin{proof}
Since there are $2^{MN}$ sign matrices of size $N\times M$, the probability $P_M$ in the statement is given by:
$$P_M=\frac{1}{2^{MN}}\#PHM_{M\times N}$$

With this formula in hand, the result follows from Proposition 1.2, by using standard estimates for sums of binomial coefficients (see Lemma 2.4 below). 
\end{proof}

In general, we have the following result, due to de Launey and Levin:

\begin{theorem}[\cite{ll2}]
The probability for a random $H\in M_{M\times N}(\pm1)$ to be PHM is
$$P_M\simeq\frac{2^{(M-1)^2}}{\sqrt{(2\pi N)^{\binom{M}{2}}}}$$
in the $N\in 4\mathbb N$, $N\to\infty$ limit.
\end{theorem}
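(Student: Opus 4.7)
The plan is to follow the random walk and Fourier inversion strategy alluded to above. First, I would reformulate the partial Hadamard condition as a return-to-origin event for a random walk. To each column $c \in \{\pm 1\}^M$ of $H$ attach the pairwise-product vector $v(c) = (c_ic_j)_{i<j} \in \{\pm 1\}^{\binom{M}{2}}$; then pairwise orthogonality of the rows is exactly $\sum_{k=1}^N v(c_k)=0$ in $\mathbb{Z}^{\binom{M}{2}}$. Since the columns of a uniformly random $H$ are i.i.d.\ uniform on $\{\pm 1\}^M$, the probability $P_M$ equals the return probability of this random walk after $N$ steps.

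Second, apply Fourier inversion on the torus. With
$$\phi(t) = \frac{1}{2^M} \sum_{c \in \{\pm 1\}^M} \exp\Bigl(i \sum_{i<j} t_{ij} c_ic_j\Bigr),$$
one has $P_M = (2\pi)^{-\binom{M}{2}} \int_{[-\pi,\pi]^{\binom{M}{2}}} \phi(t)^N \, dt$. The $N\to\infty$ asymptotics is then governed by Laplace-type contributions from the peak set $S = \{t : |\phi(t)|=1\}$.

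Third, I would locate $S$ explicitly. Writing $T$ for the symmetric off-diagonal matrix with $T_{ij}=t_{ij}$, the equation $|\phi(t)|=1$ forces all $2^{M-1}$ distinct values of $\tfrac12\epsilon^T T\epsilon$ (as $\epsilon$ ranges over $\{\pm 1\}^M/\{\pm 1\}$) to coincide modulo $2\pi$. Flipping a single coordinate of $\epsilon$ shows $\sum_{j\neq k} T_{kj}\epsilon_j \in \pi\mathbb{Z}$ for every $\epsilon$ and every $k$, forcing $t_{ij}\in(\pi/2)\mathbb{Z}$ and imposing the constraint that the ``odd'' edges $\{ij : t_{ij}\in\{\pm\pi/2\}\}$ form an even subgraph of $K_M$. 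The even subgraphs of $K_M$ form its cycle space, of $\mathbb{F}_2$-dimension $\binom{M}{2}-(M-1)=\binom{M-1}{2}$, giving $2^{\binom{M-1}{2}}$ parity patterns; each edge then carries two independent lifts, so $|S|=2^{\binom{M-1}{2}}\cdot 2^{\binom{M}{2}}=2^{(M-1)^2}$, matching the prefactor in the statement. At every such peak one checks $\phi(t^*)\in\{1,-1,i,-i\}$, so the hypothesis $4\mid N$ makes $\phi(t^*)^N=1$, and all contributions are positive.

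Fourth, I would run the Laplace method. A direct computation gives $E[v]=0$ and covariance $E[v_{ij}v_{kl}]=\delta_{\{i,j\},\{k,l\}}$, so $\phi(t)=1-\tfrac12|t|^2+O(|t|^4)$ near $0$; translation invariance gives the same quadratic behaviour of $-\log|\phi|$ near every peak. Each peak therefore contributes $(2\pi)^{-\binom{M}{2}}\cdot (2\pi/N)^{\binom{M}{2}/2}=1/\sqrt{(2\pi N)^{\binom{M}{2}}}$, and summing over $S$ yields the desired asymptotic. The hard part, as in any multivariate local limit theorem of this kind, is the uniform tail estimate $|\phi(t)|\leq 1-\delta$ outside small balls around $S$, together with enough control on the $O(|t|^4)$ remainder to upgrade the formal Gaussian calculation to a genuine asymptotic equivalence. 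This is the ``real analysis'' input that \cite{ll2} supplies, and any rigorous reproduction has to verify that no clusters of near-peaks conspire to perturb the leading order.
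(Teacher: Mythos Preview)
Your proposal is correct and follows exactly the route the paper attributes to \cite{ll2}: the random walk reformulation, Fourier inversion, identification of the peak set, and Laplace/Gaussian analysis near the peaks, with the technical tail bounds deferred to \cite{ll2}. The paper itself gives no argument beyond that one-sentence summary, so your sketch is in fact a considerably more detailed version of the same proof; in particular your count $|S|=2^{\binom{M-1}{2}}\cdot 2^{\binom{M}{2}}=2^{(M-1)^2}$ via the cycle space of $K_M$, the observation that $\phi(t^*)\in\{1,i,-1,-i\}$ explains the $4\mid N$ hypothesis, and the identity covariance giving the $(2\pi N)^{-\binom{M}{2}/2}$ Gaussian factor are all sound.
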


\begin{proof}
The proof in \cite{ll2} uses a random walk interpretation of the PHM, then the Fourier inversion formula, and finally a number of quite technical real analysis estimates.
\end{proof}

\section{Butson matrices, abelian squares}

As mentioned in \cite{ll2}, the method there should apply to more general situations. We discuss in what follows a potential extension to the partial Butson matrices:

\begin{definition}
A partial Butson matrix (PBM) is a matrix $H\in M_{M\times N}(\mathbb Z_q)$ having its rows pairwise orthogonal, where $\mathbb Z_q\subset\mathbb C^\times$ is the group of $q$-roots of unity.
\end{definition}

Observe that at $q=2$ we obtain the PHM. In general, the interest comes from the Butson matrices $H\in M_N(\mathbb Z_q)$, and from their $M\times N$ submatrices. See \cite{but}, \cite{tzy}.

Two PBM are called ``equivalent'' if one can pass from one to the other by permuting the rows and columns, or by multiplying the rows and columns by numbers in $\mathbb Z_q$.

Up to this equivalence, we can assume that $H$ is dephased, in the sense that its first row consists of $1$ entries only. We can also put $H$ in ``standard form'', as follows:

\begin{definition}
We say that $H\in M_{M\times N}(\mathbb Z_q)$ is in standard form if the low powers of $w=e^{2\pi i/q}$ are moved to the left as much as possible, by proceeding from top to bottom.
\end{definition}

Let us first try to understand the case $M=2$. Here a dephased partial Butson matrix $H\in M_{2\times N}(\mathbb Z_q)$ must look as follows, with $\lambda_i\in\mathbb Z_q$ satisfying $\lambda_1+\ldots+\lambda_N=0$:
$$H=\begin{pmatrix}1&\ldots&1\\ \lambda_1&\ldots&\lambda_N\end{pmatrix}$$

With $q=p_1^{k_1}\ldots p_s^{k_s}$, we must have, according to Lam and Leung \cite{lle}, $N\in p_1\mathbb N+\ldots+p_s\mathbb N$. Observe however that at $s\geq 2$ this obstruction dissapears at $N\geq p_1p_2$.

In this section we restrict attention to the prime power case. First, we have:

\begin{proposition}
When $q=p^k$ is a prime power, the standard form of the dephased partial Butson matrices at $M=2$ is
$$H=\begin{pmatrix}
1&1&\ldots&1&\ldots&\ldots&1&1&\ldots&1\\
\underbrace{1}_{a_1}&\underbrace{w}_{a_2}&\ldots&\underbrace{w^{q/p-1}}_{a_{q/p}}&\ldots&\ldots&\underbrace{w^{q-q/p}}_{a_1}&\underbrace{w^{q-q/p+1}}_{a_2}&\ldots&\underbrace{w^{q-1}}_{a_{q/p}}
\end{pmatrix}$$
where $w=e^{2\pi i/q}$ and where $a_1,\ldots,a_{q/p}\in\mathbb N$ are multiplicities, summing up to $N/p$.
\end{proposition}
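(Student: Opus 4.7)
The plan is to translate the orthogonality condition into a vanishing sum of $q$-th roots of unity, apply the Lam--Leung classification in the prime power case to pin down the multiplicities of the second-row entries, and then sort those entries to obtain the standard form.

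First I would use the hypothesis that $H$ is dephased: its first row is all $1$'s, so writing the second row as $\lambda_1,\ldots,\lambda_N\in\mathbb{Z}_q$ the orthogonality of the two rows reduces to $\lambda_1+\ldots+\lambda_N=0$. Recording the multiplicities $m_j=\#\{i:\lambda_i=w^j\}$ for $j=0,1,\ldots,q-1$, the problem becomes the classification of tuples $(m_0,\ldots,m_{q-1})\in\mathbb{N}^q$ satisfying $\sum_j m_j=N$ and $\sum_j m_j w^j=0$.

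The key step is the Lam--Leung theorem \cite{lle} in the prime power case: when $q=p^k$, every vanishing $\mathbb{N}$-linear combination of $q$-th roots of unity is an $\mathbb{N}$-linear combination of the ``prime cycles''
$$w^r\bigl(1+\zeta+\zeta^2+\ldots+\zeta^{p-1}\bigr),\qquad r=0,1,\ldots,q/p-1,$$
where $\zeta=w^{q/p}$ is a primitive $p$-th root of unity. Equivalently, the multiplicities satisfy $m_j=m_{j+q/p}$ for all $j$, so they depend only on the residue $j\bmod q/p$. Setting $a_{r+1}:=m_r=m_{r+q/p}=\ldots=m_{r+(p-1)q/p}$ for $r=0,\ldots,q/p-1$, the total count gives $p(a_1+\ldots+a_{q/p})=N$, hence $a_1+\ldots+a_{q/p}=N/p$.

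Finally, putting $H$ in standard form means listing the columns so that the exponents of $w$ appearing in the second row are non-decreasing. Since $w^j$ occurs with multiplicity $a_{(j\bmod q/p)+1}$, this listing produces exactly the block pattern in the statement: $a_1$ copies of $w^0$, then $a_2$ copies of $w^1$, and so on up to $a_{q/p}$ copies of $w^{q/p-1}$, with this block of length $N/p$ then repeated shifted by $w^{q/p},w^{2q/p},\ldots,w^{q-q/p}$ to exhaust all exponents up through $w^{q-1}$. The only non-trivial ingredient is the Lam--Leung classification in the prime power case; once granted, the rest is bookkeeping with the standard form.
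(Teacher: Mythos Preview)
Your proof is correct and follows essentially the same approach as the paper: both reduce the orthogonality condition to a vanishing sum $\lambda_1+\ldots+\lambda_N=0$ of $q$-th roots of unity and then invoke the known classification of such sums in the prime power case (what you attribute to Lam--Leung, the paper simply calls ``well-known''). Your write-up is just a more detailed unpacking of the one-line argument in the paper.
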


\begin{proof}
Indeed, it is well-known that for $q=p^k$ the solutions of $\lambda_1+\ldots+\lambda_N=0$ with $\lambda_i\in\mathbb Z_q$ are, up to permutations of the terms, exactly those in the statement.
\end{proof}

Our next objective will be to count the matrices in Proposition 2.3. We use:

\begin{lemma}
We have the estimate
$$\sum_{a_1+\ldots+a_s=n}\binom{n}{a_1,\ldots,a_s}^p
\simeq s^{pn}\sqrt{\frac{s^{s(p-1)}}{p^{s-1}(2\pi n)^{(s-1)(p-1)}}}$$
in the $n\to\infty$ limit.
\end{lemma}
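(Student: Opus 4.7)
The plan is to apply a Laplace-type argument for sums, in the spirit of the Richmond--Shallit computation mentioned earlier: identify the dominant term, approximate the logarithm of the summand by a quadratic form via Stirling, and then evaluate the resulting Gaussian sum by converting it to an integral. The dominant term is clearly at $a_i=n/s$, since the multinomial coefficient is maximal there; write $a_i=n/s+x_i$ with $x_i\in\mathbb Z$ satisfying $\sum x_i=0$, and assume for simplicity $s\mid n$ (the remaining cases follow by the same argument with an $O(1)$ rounding adjustment).

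Applying Stirling to $\ln n!-\sum_i\ln a_i!$ and Taylor-expanding $a\ln a$ around $a=n/s$, the linear term is killed by $\sum x_i=0$ and one obtains
$$\ln\binom{n}{a_1,\ldots,a_s}\simeq n\ln s+\tfrac{1}{2}\ln(2\pi n)-\tfrac{s}{2}\ln(2\pi n/s)-\tfrac{s}{2n}\sum_{i=1}^s x_i^2.$$
After raising to the $p$-th power, the summand factors as the central value times the Gaussian weight $\exp\bigl(-\tfrac{ps}{2n}\sum x_i^2\bigr)$. The next step is to replace the sum over the lattice $\{x\in\mathbb Z^s:\sum x_i=0\}$ by an integral. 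Eliminating $x_s=-(x_1+\cdots+x_{s-1})$, the quadratic form on $\mathbb R^{s-1}$ has matrix $I+J$, whose eigenvalues are $s$ (with multiplicity one) and $1$ (with multiplicity $s-2$); hence the determinant equals $s$ and the Gaussian integral is
$$\int_{\mathbb R^{s-1}}\exp\Bigl(-\tfrac{ps}{2n}\,x^{T}(I+J)x\Bigr)\,dx=\frac{(2\pi n/(ps))^{(s-1)/2}}{\sqrt s}.$$

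Combining, the sum becomes $s^{pn}\cdot(2\pi n)^{p/2}(2\pi n/s)^{-ps/2}\cdot(2\pi n/(ps))^{(s-1)/2}/\sqrt s$. A direct simplification collapses this to $s^{pn}\cdot s^{s(p-1)/2}\bigl(p^{s-1}(2\pi n)^{(p-1)(s-1)}\bigr)^{-1/2}$, which is precisely the claimed estimate.

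The main technical step, and the only genuine obstacle, is making the passage from sum to integral rigorous. One has to show that the contribution of the summands with $\sum x_i^2\gg n^{1+\varepsilon}$ is negligible (a standard tail bound, since the Gaussian decay kicks in well before the multinomial itself becomes small), and that inside a moderate window $|x_i|\leq n^{1/2+\varepsilon}$ the Stirling expansion controls the error in the quadratic approximation uniformly. Both are routine Laplace-method estimates, essentially identical to those in \cite{rsh}, the only complication being that the Gaussian now lives on an $(s-1)$-dimensional hyperplane rather than on a line.
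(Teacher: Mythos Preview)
Your proof is correct and follows essentially the same route as the paper's: both identify the peak at $a_i=n/s$, apply Stirling to obtain a Gaussian approximation of the summand, and evaluate the resulting $(s-1)$-dimensional Gaussian integral. The only cosmetic differences are that the paper rescales via $a_i=n/s+x_i\sqrt{n}$ (absorbing the factor $n$ from the quadratic form into the change of variables) and quotes the value of the Gaussian integral directly from \cite{rsh}, whereas you keep $x_i$ unscaled and compute the integral explicitly through $\det(I+J)=s$.
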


\begin{proof}
This is proved by Richmond and Shallit in \cite{rsh} at $p=2$, and the proof in the general case, $p\in\mathbb N$, is similar. More precisely, let us denote by $c_{sp}$ the sum on the left. By setting $a_i=\frac{n}{s}+x_i\sqrt{n}$ and then by using the various formulae in \cite{rsh}, we obtain:
\begin{eqnarray*}
c_{sp}
&\simeq&s^{pn}(2\pi n)^{\frac{(1-s)p}{2}}s^{\frac{sp}{2}}\exp\left(-\frac{sp}{2}\sum_{i=1}^sx_i^2\right)\\
&\simeq&s^{pn}(2\pi n)^{\frac{(1-s)p}{2}}s^{\frac{sp}{2}}\underbrace{\int_0^n\ldots\int_0^n}_{s-1}\exp\left(-\frac{sp}{2}\sum_{i=1}^sx_i^2\right)da_1\ldots da_{s-1}\\
&=&s^{pn}(2\pi n)^{\frac{(1-s)p}{2}}s^{\frac{sp}{2}}n^{\frac{s-1}{2}}\underbrace{\int_0^n\ldots\int_0^n}_{s-1}\exp\left(-\frac{sp}{2}\sum_{i=1}^{s-1}x_i^2-\frac{sp}{2}\left(\sum_{i=1}^{s-1}x_i\right)^2\right)dx_1\ldots dx_{s-1}\\
&=&s^{pn}(2\pi n)^{\frac{(1-s)p}{2}}s^{\frac{sp}{2}}n^{\frac{s-1}{2}}\times\pi^{\frac{s-1}{2}}s^{-\frac{1}{2}}\left(\frac{sp}{2}\right)^{\frac{1-s}{2}}\\
&=&s^{pn}(2\pi n)^{\frac{(1-s)p}{2}}s^{\frac{sp}{2}-\frac{1}{2}+\frac{1-s}{2}}\left(\frac{p}{2\pi n}\right)^{\frac{1-s}{2}}\\
&=&s^{pn}(2\pi n)^{\frac{(1-s)(p-1)}{2}}s^{\frac{sp-s}{2}}p^{\frac{1-s}{2}}
\end{eqnarray*}

Thus we have obtained the formula in the statement, and we are done.
\end{proof}

Now with Lemma 2.4 in hand, we can now prove:

\begin{theorem}
When $q=p^k$ is a prime power, the probability for a randomly chosen $M\in M_{2\times N}(\mathbb Z_q)$, with $N\in p\mathbb N$, $N\to\infty$, to be partial Butson is:
$$P_2\simeq\sqrt{\frac{p^{2-\frac{q}{p}}q^{q-\frac{q}{p}}}{(2\pi N)^{q-\frac{q}{p}}}}$$
In particular, for $q=p$ prime, $P_2\simeq\sqrt{\frac{p^p}{(2\pi N)^{p-1}}}$. Also, for $q=2^k$, $P_2\simeq2\sqrt{\left(\frac{q/2}{2\pi N}\right)^{q/2}}$.
\end{theorem}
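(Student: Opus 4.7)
The plan is to reduce $P_2$ to a counting problem via Proposition~2.3, and then combine Stirling's formula with Lemma~2.4 to extract the asymptotics.

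First I would pass from all PBMs to dephased ones: every PBM arises uniquely from a dephased one by prescribing its first row in $\mathbb Z_q^N$, so if $D_N$ denotes the number of dephased PBMs at $M=2$, then $P_2 = D_N/q^N$. By Proposition~2.3, such a dephased matrix is determined by a tuple $(a_1,\ldots,a_{q/p})\in\mathbb N^{q/p}$ with $\sum a_i = N/p$, together with an ordering of the second row whose entries have multiplicities $(a_1,\ldots,a_{q/p})$ repeated $p$ times; counting orderings by a multinomial yields
\[D_N = \sum_{a_1+\ldots+a_{q/p}=N/p} \frac{N!}{(a_1!)^p\cdots(a_{q/p}!)^p}.\]
The key algebraic step is the factorization $N!/\prod_i(a_i!)^p = \binom{N}{N/p,\ldots,N/p}\binom{N/p}{a_1,\ldots,a_{q/p}}^p$, which pulls out a central multinomial (independent of the $a_i$) and leaves precisely the sum addressed by Lemma~2.4.

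Next I would apply Stirling to the central multinomial to get $\binom{N}{N/p,\ldots,N/p} \simeq p^N \cdot p^{p/2}(2\pi N)^{(1-p)/2}$, and Lemma~2.4 with $s=q/p$ and $n=N/p$ to the remaining sum, producing a factor $(q/p)^N$ times an explicit square root. The identity $p^N(q/p)^N = q^N$ then cancels the $1/q^N$ normalization exactly, reducing $P_2$ to a product of square-root factors times the Stirling prefactor.

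The main obstacle is the final exponent bookkeeping: pulling $p^{p/2}(2\pi N)^{(1-p)/2}$ inside the radical, expanding $(2\pi N/p)^{(q/p-1)(p-1)} = (2\pi N)^{(q/p-1)(p-1)}/p^{(q/p-1)(p-1)}$, and collecting powers of $p$, $q$, and $2\pi N$. Using $q - q/p = (q/p)(p-1)$, one obtains the target exponents $q-q/p$ for both $q$ and $(2\pi N)$ and $2-q/p$ for $p$, which is the stated formula. The two special cases serve as sanity checks: at $q=p$ the exponent $q/p = 1$ collapses things to $\sqrt{p^p/(2\pi N)^{p-1}}$, while at $p=2$, $q=2^k$, the factor $2^{2-q/2}q^{q/2} = 4(q/2)^{q/2}$ recovers the Richmond-Shallit form $2\sqrt{((q/2)/(2\pi N))^{q/2}}$.
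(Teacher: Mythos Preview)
Your proposal is correct and follows essentially the same route as the paper: reduce to dephased matrices via Proposition~2.3, factor the resulting multinomial sum as $\binom{N}{N/p,\ldots,N/p}\sum\binom{N/p}{a_1,\ldots,a_{q/p}}^p$, then apply Stirling to the central multinomial and Lemma~2.4 with $s=q/p$, $n=N/p$ to the sum, and finish with the exponent bookkeeping. The paper organizes the cancellation slightly differently---splitting $1/q^N$ as $(1/p^N)(1/(q/p)^N)$ before invoking the two estimates---but this is purely cosmetic.
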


\begin{proof}
First, the probability $P_M$ for a random $M\in M_{M\times N}(\mathbb Z_q)$ to be PBM is:
$$P_M=\frac{1}{q^{MN}}\#PBM_{M\times N}$$

Thus, according to Proposition 2.3, we have the following formula:
\begin{eqnarray*}
P_2
&=&\frac{1}{q^N}\sum_{a_1+\ldots +a_{q/p}=N/p}\binom{N}{\underbrace{a_1\ldots a_1}_p\ldots\ldots\underbrace{a_{q/p}\ldots a_{q/p}}_p}\\
&=&\frac{1}{q^N}\binom{N}{\underbrace{N/p\ldots N/p}_p}\sum_{a_1+\ldots +a_{q/p}=N/p}\binom{N/p}{a_1\ldots a_{q/p}}^p\\
&=&\frac{1}{p^N}\binom{N}{\underbrace{N/p\ldots N/p}_p}\times\frac{1}{(q/p)^N}\sum_{a_1+\ldots +a_{q/p}=N/p}\binom{N/p}{a_1\ldots a_{q/p}}^p
\end{eqnarray*}

Now by using the Stirling formula for the left term, and Lemma 2.4 with $s=q/p$ and $n=N/p$ for the right term, we obtain:
\begin{eqnarray*}
P_2
&=&\sqrt{\frac{p^p}{(2\pi N)^{p-1}}}\times\sqrt{\frac{(q/p)^{\frac{q}{p}(p-1)}}{p^{\frac{q}{p}-1}(2\pi N/p)^{(\frac{q}{p}-1)(p-1)}}}\\
&=&\sqrt{\frac{p^{p-\frac{q}{p}(p-1)-\frac{q}{p}+1+(\frac{q}{p}-1)(p-1)}q^{\frac{q}{p}(p-1)}}{(2\pi N)^{p-1+(\frac{q}{p}-1)(p-1)}}}\\
&=&\sqrt{\frac{p^{2-\frac{q}{p}}q^{q-\frac{q}{p}}}{(2\pi N)^{q-\frac{q}{p}}}}
\end{eqnarray*}

Thus we have obtained the formula in the statement, and we are done.
\end{proof}

\section{Two prime factors, random walks}

In this section we discuss the case where $M=2$ and $q=p_1^{k_1}p_2^{k_2}$ has two prime factors. Let us first examine the simplest such case, namely $q=p_1p_2$, with $p_1,p_2$ primes:

\begin{proposition}
When $q=p_1p_2$ is a product of distinct primes, the standard form of the dephased partial Butson matrices at $M=2$ is
$$H=\begin{pmatrix}
1&1&\ldots&1&\ldots&\ldots&1&1&\ldots&1\\
\underbrace{1}_{A_{11}}&\underbrace{w}_{A_{12}}&\ldots&\underbrace{w^{p_2-1}}_{A_{1p_2}}&\ldots&\ldots&\underbrace{w^{q-p_2}}_{A_{p_11}}&\underbrace{w^{q-p_2+1}}_{A_{p_12}}&\ldots&\underbrace{w^{q-1}}_{A_{p_1p_2}}
\end{pmatrix}$$
where $w=e^{2\pi i/q}$, and $A\in M_{p_1\times p_2}(\mathbb N)$ is of the form $A_{ij}=B_i+C_j$, with $B_i,C_j\in\mathbb N$.
\end{proposition}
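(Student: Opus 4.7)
The plan is to reduce the statement to the Lam--Leung classification of nonnegative vanishing sums of $q$-th roots of unity, which is the key external input available when $q$ has at most two prime factors.

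First, I would translate the orthogonality into a vanishing sum. Because the first row is all ones, orthogonality is just $\sum_{n=1}^N \lambda_n = 0$, and collecting equal terms rewrites this as $\sum_{m=0}^{q-1} A(m)\, w^m = 0$ with $A(m) \in \mathbb{N}$ counting the columns whose second-row entry equals $w^m$. The content of the proposition is then purely about the structure of such nonnegative vanishing sums. The Lam--Leung theorem \cite{lle} says that when $q = p_1 p_2$ with $p_1 \neq p_2$ prime, every such relation decomposes as an $\mathbb{N}$-linear combination of the two families of ``atomic'' relations $\sum_{k=0}^{p_1-1} w^{\ell + k p_2} = 0$ (a $p_1$-cycle, indexed by $\ell \in \mathbb{Z}_{p_2}$) and $\sum_{k=0}^{p_2-1} w^{\ell + k p_1} = 0$ (a $p_2$-cycle, indexed by $\ell \in \mathbb{Z}_{p_1}$).

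Next I would use the Chinese Remainder bijection $\mathbb{Z}_q \cong \mathbb{Z}_{p_1} \times \mathbb{Z}_{p_2}$, $m \mapsto (m \bmod p_1,\, m \bmod p_2)$. Under this identification the $p_1$-cycles are exactly the fibers of the second projection (sets with fixed $j$-coordinate), and the $p_2$-cycles are the fibers of the first projection (sets with fixed $i$-coordinate). Hence each exponent $(i,j)$ lies in exactly one $p_1$-cycle (determined by $j$) and exactly one $p_2$-cycle (determined by $i$). Writing $C_j$ for the multiplicity of the $p_1$-cycle at second coordinate $j$ and $B_i$ for the multiplicity of the $p_2$-cycle at first coordinate $i$ in the Lam--Leung decomposition, one reads off $A_{ij} = B_i + C_j$, which is the claim.

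The only bookkeeping point, and the step where I expect to spend any care, is the following discrepancy: the display in the proposition labels $w^m$ by $(i,j)$ via $m = (i-1)p_2 + (j-1)$, which is a different bijection $\{0,\ldots,q-1\} \cong \{1,\ldots,p_1\} \times \{1,\ldots,p_2\}$ than the CRT one. Since columns of $H$ may be freely permuted under Butson equivalence, I would reorder them so that the two labelings coincide; the standard-form convention of Definition 2.2 is preserved under such a reshuffling, as it only constrains the coarse block-ordering of the second-row entries, not the grouping within each block. Once the labelings agree, the decomposition $A_{ij} = B_i + C_j$ produced by CRT and Lam--Leung is exactly the form asserted in the proposition.
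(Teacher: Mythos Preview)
Your approach is exactly the paper's: reduce orthogonality to a vanishing sum of $q$-th roots and invoke the Lam--Leung cycle decomposition; the paper's two-sentence proof is your first two paragraphs compressed, and it does not engage at all with the labeling issue you raise in your third paragraph.

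That issue is real, and you are right to flag it. Under the literal display indexing $m=(i-1)p_2+(j-1)$ the $p_2$-cycles are \emph{not} the rows of $A$: for instance at $q=6$ a single $p_2$-cycle $\{w^0,w^2,w^4\}$ yields the multiplicity matrix $\left(\begin{smallmatrix}1&0&1\\0&1&0\end{smallmatrix}\right)$, which is not of the form $B_i+C_j$. However, your proposed fix contains a small slip: the claim that Definition~2.2's standard form is ``preserved under such a reshuffling'' is incorrect, since that definition totally orders the second-row exponents as $0,1,\ldots,q-1$, not merely in blocks. The honest resolution (implicitly adopted by the paper) is to read the display with $(i,j)$ taken as the CRT coordinates $(m\bmod p_1,\,m\bmod p_2)$ rather than as the quotient--remainder pair for division by $p_2$; equivalently, one accepts that the second-row entries are listed in a permuted order rather than strict standard form.
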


\begin{proof}
We use the fact that for $q=p_1p_2$ any vanishing sum of $q$-roots of unity decomposes as a sum of cycles. See \cite{lle}. Now if we denote by $B_i,C_j\in\mathbb N$ the multiplicities of the various $p_2$-cycles and $p_1$-cycles, then we must have $A_{ij}=B_i+C_j$, as claimed.
\end{proof}

Regarding the matrices of type $A_{ij}=B_i+C_j$, when taking them over integers, $B_i,C_j\in\mathbb Z$, these form a vector space of dimension $p_1+p_2-1$. Given $A\in M_{p_1\times p_2}(\mathbb Z)$, the ``test'' for deciding if we have  $A_{ij}=B_i+C_j$ or not is $A_{ij}+A_{kl}=A_{il}+A_{jk}$.

The problem comes of course from the assumption $B_i,C_j\geq0$, which is quite a subtle one. In what follows we restrict attention to the case $p_1=2$. Here we have:

\begin{theorem}
For $q=2p$ with $p\geq 3$ prime, $P_2$ equals the probability for a random walk on $\mathbb Z^p$ to end up on the diagonal, i.e. at a position of type $(t,\ldots,t)$, with $t\in\mathbb Z$.
\end{theorem}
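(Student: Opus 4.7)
The plan is to apply Proposition 3.1 with $p_1=2$, $p_2=p$, and then reinterpret the resulting combinatorial data as a random walk. With $q=2p$, the same dephasing argument used in the proof of Theorem 2.5 gives $P_2=\frac{1}{(2p)^N}D_N$, where $D_N$ denotes the number of dephased PBM in $M_{2\times N}(\mathbb{Z}_{2p})$.

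First I would unpack Proposition 3.1 in the case $p_1=2$. A dephased PBM is encoded by a multiplicity matrix $A\in M_{2\times p}(\mathbb{N})$ with $\sum_{ij}A_{ij}=N$ and of the form $A_{ij}=B_i+C_j$, together with an arbitrary ordering of the $N$ columns; this gives the multinomial count
\begin{equation*}
D_N=\sum_{A}\binom{N}{A_{11},A_{21},\ldots,A_{1p},A_{2p}}.
\end{equation*}
The essential observation, specific to $p_1=2$, is that for a nonnegative $2\times p$ integer matrix $A$, a representation $A_{ij}=B_i+C_j$ with $B_i,C_j\in\mathbb{N}$ exists if and only if the two rows of $A$ differ by a constant integer, i.e.\ $A_{1j}-A_{2j}=t$ for some $t\in\mathbb{Z}$ independent of $j$. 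The forward direction is immediate, and for the converse one sets $C_j=\min(A_{1j},A_{2j})$, together with $(B_1,B_2)=(t,0)$ if $t\geq 0$ and $(0,-t)$ otherwise.

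Next I would match this to the random walk. Parameterize a length-$N$ walk on $\mathbb{Z}^p$ with steps in $\{\pm e_1,\ldots,\pm e_p\}$ by the multiplicities $(n_j^+,n_j^-)_{j=1}^p\in\mathbb{N}^{2p}$ of its steps. The number of walks realizing a given multiplicity vector (with $\sum_j(n_j^++n_j^-)=N$) is the multinomial coefficient $\binom{N}{n_1^+,n_1^-,\ldots,n_p^+,n_p^-}$, each walk has uniform probability $(2p)^{-N}$, and the endpoint is $(n_1^+-n_1^-,\ldots,n_p^+-n_p^-)$. Setting $A_{1j}=n_j^+$ and $A_{2j}=n_j^-$, the condition that the walk ends on the diagonal translates exactly into the row-difference condition from the previous paragraph. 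Summing and dividing by $(2p)^N$ therefore produces $P_2=\Pr(\text{walk ends on diagonal})$, as claimed.

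I do not anticipate a real obstacle here: the argument is essentially a dictionary between the two pictures. The only point needing an argument is the equivalence ``$A_{ij}=B_i+C_j$ if and only if the rows of $A$ differ by a constant'', which is clean because $p_1=2$. For $p_1\geq 3$ the analogous bilinear condition $A_{ij}+A_{kl}=A_{il}+A_{jk}$ recalled after Proposition 3.1 is a genuine higher-rank constraint on $A$, and no uniform random-walk model of the present type accommodates it, which is presumably why the statement is restricted to $p_1=2$.
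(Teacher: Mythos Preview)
Your proposal is correct and follows essentially the same route as the paper: apply Proposition~3.1 with $p_1=2$, characterize the admissible $2\times p$ multiplicity matrices as those whose two rows differ by a constant $t\in\mathbb Z$, and then read this off as the condition that a length-$N$ simple random walk on $\mathbb Z^p$ with steps $\pm e_j$ ends at $(t,\ldots,t)$. The paper reaches the same characterization via a small case split on the sign of $A_{11}-A_{21}$, whereas you phrase it as a single if-and-only-if using $C_j=\min(A_{1j},A_{2j})$; the content is identical.
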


\begin{proof}
According to Proposition 3.1, we must understand the matrices $A\in M_{2\times p}(\mathbb N)$ which decompose as $A_{ij}=B_i+C_j$, with $B_i,C_j\geq0$. But this is an easy task, because depending on $A_{11}$ vs. $A_{21}$ we have 3 types of solutions, as follows:
$$\begin{pmatrix}
a_1&\ldots&a_p\\
a_1&\ldots&a_p
\end{pmatrix},\qquad
\begin{pmatrix}
a_1&\ldots&a_p\\
a_1+t&\ldots&a_p+t
\end{pmatrix},\qquad
\begin{pmatrix}
a_1+t&\ldots&a_p+t\\
a_1&\ldots&a_p
\end{pmatrix}$$

Here $a_i\geq0$ and $t\geq1$. Now since cases 2,3 contribute in the same way, we obtain:
\begin{eqnarray*}
P_2
&=&\frac{1}{(2p)^N}\sum_{2\Sigma a_i=N}\binom{N}{a_1,a_1,\ldots,a_p,a_p}\\
&+&\frac{2}{(2p)^N}\sum_{t\geq1}\sum_{2\Sigma a_i+pt=N}\binom{N}{a_1,a_1+t,\ldots,a_p,a_p+t}
\end{eqnarray*}

We can write this formula in a more compact way, as follows:
$$P_2=\frac{1}{(2p)^N}\sum_{t\in\mathbb Z}\sum_{2\Sigma a_i+p|t|=N}\binom{N}{a_1,a_1+|t|,\ldots,a_p,a_p+|t|}$$

Now since the sum on the right, when rescaled by $\frac{1}{(2p)^N}$, is exactly the probability for a random walk on $\mathbb Z^p$ to end up at $(t,\ldots,t)$, this gives the result.
\end{proof}

According to the above result we have $P_2=\sum_{t\in\mathbb Z}P_2^{(t)}$, where $P_2^{(t)}$ with $t\in\mathbb Z$ is the probability for a random walk on $\mathbb Z^p$ to end up at $(t,\ldots,t)$. Observe that, by using Lemma 2.4 above with $s,p,n$ equal respectively to $p,2,N/2$, we obtain:
\begin{eqnarray*}
P_2^{(0)}
&=&\frac{1}{(2p)^N}\binom{N}{N/2}\sum_{a_1+\ldots+a_p=N/2}\binom{N/2}{a_1,\ldots,a_p}^2\\
&\simeq&\sqrt{\frac{2}{\pi N}}\times\sqrt{\frac{p^p}{2^{p-1}(\pi N)^{p-1}}}
=2\sqrt{\left(\frac{p}{2\pi N}\right)^p}
\end{eqnarray*}

Regarding now the probability $P_2^{(t)}$ of ending up at $(t,\ldots,t)$, in principle for small $t$ this can be estimated by using a modification of the method in \cite{rsh}. However, it is not clear on how to compute the full diagonal return probability in Theorem 3.2.

Let us discuss now the exponents $q=3p$. The same method as in the proof of Theorem 3.2 works, with the ``generic'' solution for $A$ being as follows:
$$A=\begin{pmatrix}
a_1&\ldots&a_p\\
a_1+t&\ldots&a_p+t\\
a_1+s+t&\ldots&a_p+s+t\\
\end{pmatrix}$$

More precisely, this type of solution, with $s,t\geq1$, must be counted 6 times, then its $s=0,t\geq1$ and $s\geq1,t=0$ particular cases must be counted 3 times each, and finally the $s=t=0$ case must be counted once. Observe that the $s=t=0$ contribution is:
\begin{eqnarray*}
P_3^{(0,0)}
&=&\frac{1}{(3p)^N}\binom{N}{N/3,N/3,N/3}\sum_{a_1+\ldots+a_p=N/3}\binom{N/3}{a_1,\ldots,a_p}^3\\
&\simeq&\sqrt{\frac{27}{(2\pi N)^2}}\times\sqrt{\frac{p^{2p}}{3^{p-1}(2\pi N/3)^{2(p-1)}}}\\
&=&3\sqrt{3^p}\left(\frac{p}{2\pi N}\right)^p
\end{eqnarray*} 

Finally, regarding arbitrary exponents with two prime factors, we have:

\begin{proposition}
When $q=p_1^{k_1}p_2^{k_2}$ has exactly two prime factors, the dephased partial Butson matrices at $M=2$ are indexed by the solutions of
$$A_{ij,xy}=B_{ijy}+C_{jxy}$$
with $B_{ijy},C_{jxy}\in\mathbb N$, with $i\in\mathbb Z_{p_1}$, $j\in\mathbb Z_{p_1^{k_1-1}}$, $x\in\mathbb Z_{p_2}$, $y\in\mathbb Z_{p_2^{k_2-1}}$.
\end{proposition}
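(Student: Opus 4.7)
The plan is to generalize the proof of Proposition 3.1, replacing the prime-factorization $q=p_1p_2$ by the prime-power-factorization $q=p_1^{k_1}p_2^{k_2}$, and relying on the same key input from \cite{lle}: for $q$ with exactly two distinct prime factors, every vanishing sum of $q$-th roots of unity decomposes as a non-negative integer combination of $p_1$-cycles and $p_2$-cycles, where a $p_\ell$-cycle is a coset of the unique order-$p_\ell$ subgroup of $\mathbb Z_q$. Combined with the observation (already used in Propositions 2.3 and 3.1) that a dephased $H\in M_{2\times N}(\mathbb Z_q)$ is nothing but a multiplicity table $A=(A_\alpha)_{\alpha\in\mathbb Z_q}$ of elements in its second row satisfying $\sum_\alpha A_\alpha w^\alpha=0$, with $A_\alpha\in\mathbb N$, the question reduces to writing such a decomposition in the indicated coordinates.

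The first step is to introduce coordinates on $\mathbb Z_q$. By CRT we have $\mathbb Z_q\cong\mathbb Z_{p_1^{k_1}}\times\mathbb Z_{p_2^{k_2}}$, and I would split each factor \emph{as a set}, via $u=j+p_1^{k_1-1}i$ with $i\in\mathbb Z_{p_1}$, $j\in\mathbb Z_{p_1^{k_1-1}}$, and analogously $v=y+p_2^{k_2-1}x$ with $x\in\mathbb Z_{p_2}$, $y\in\mathbb Z_{p_2^{k_2-1}}$. This writes every element of $\mathbb Z_q$ uniquely as a 4-tuple $(i,j,x,y)$, giving the matrix entries $A_{ij,xy}$ appearing in the statement.

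The second step is to locate the prime-order subgroups in these coordinates. The order-$p_1$ subgroup of $\mathbb Z_q$ is $p_1^{k_1-1}\mathbb Z_{p_1^{k_1}}\times\{0\}$, which in my labeling is $\{(i,0,0,0):i\in\mathbb Z_{p_1}\}$. Its cosets, i.e.\ the $p_1$-cycles, are therefore obtained by fixing $(j,x,y)$ and varying $i$, and are thus parameterized by triples $(j,x,y)$. Symmetrically, the $p_2$-cycles are parameterized by triples $(i,j,y)$, obtained by fixing these and varying $x$. Writing $C_{jxy}\in\mathbb N$ for the multiplicity of the $p_1$-cycle indexed by $(j,x,y)$ and $B_{ijy}\in\mathbb N$ for the multiplicity of the $p_2$-cycle indexed by $(i,j,y)$, and summing the contributions passing through $(i,j,x,y)$, we get $A_{ij,xy}=B_{ijy}+C_{jxy}$, as claimed. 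Conversely, any such pair $(B,C)$ produces by construction a non-negative combination of vanishing cycles, giving a valid dephased PBM; this is the easy direction.

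The main obstacle is really just the coordinate bookkeeping: checking that the \emph{set-theoretic} splittings $u=j+p_1^{k_1-1}i$ and $v=y+p_2^{k_2-1}x$ (which are not group isomorphisms) are nonetheless compatible with the prime-order subgroups, so that those subgroups really do correspond to varying only the $i$- (resp.\ $x$-) coordinate. Once this is verified, Lam-Leung applies directly and the proposition follows; no new analytic input is needed beyond Proposition 3.1.
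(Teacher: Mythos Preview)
Your proposal is correct and follows essentially the same approach as the paper: invoke Lam--Leung to decompose the vanishing sum into $p_1$- and $p_2$-cycles, then introduce coordinates on $\mathbb Z_q\cong\mathbb Z_{p_1^{k_1}}\times\mathbb Z_{p_2^{k_2}}$ so that the cycle multiplicities read off as $B_{ijy}$ and $C_{jxy}$. The paper argues by first writing out the $q=4p$ case explicitly and then stating the general multi-index formula, whereas you set up the coordinates more systematically via CRT and the $p$-adic splitting; but the content is the same.
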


\begin{proof}
We follow the method in the proof of Proposition 3.1. First, according to \cite{lle}, for $q=p_1^{k_1}p_2^{k_2}$ any vanishing sum of $q$-roots of unity decomposes as a sum of cycles.

Let us first work out a simple particular case, namely $q=4p$. Here the multiplicity matrices $A\in M_{4\times p}(\mathbb N)$ appear as follows:
$$A=\begin{pmatrix}B_1&\ldots&B_1\\ B_2&\ldots&B_2\\ B_3&\ldots&B_3\\ B_4&\ldots&B_4\end{pmatrix}+
\begin{pmatrix}C_1&\ldots&C_p\\ D_1&\ldots&D_p\\ C_1&\ldots&C_p\\ D_1&\ldots&D_p\end{pmatrix}$$

Thus, if we use double binary indices for the elements of $\{1,2,3,4\}$, the condition is:
$$A_{ij,x}=B_{ij}+C_{jx}$$

The same method works for any exponent of type $q=p_1^{k_1}p_2^{k_2}$, the formula being:
$$A_{i_1\ldots i_{k_1},x_1\ldots x_{k_2}}=B_{i_1\ldots i_{k_1},x_2\ldots x_{k_2}}+C_{i_2\ldots i_{k_1},x_1\ldots x_{k_2}}$$

But this gives the formula in the statement, and we are done.
\end{proof}

\section{Three rows: tristochastic matrices}

At $M=3$ now, we first restrict attention to the case where $q=p$ is prime. In this case, Proposition 2.3 becomes simply:
$$H=\begin{pmatrix}
1&1&\ldots&1\\
\underbrace{1}_a&\underbrace{w}_a&\ldots&\underbrace{w^{p-1}}_a
\end{pmatrix}$$

We call a matrix $A\in M_p(\mathbb N)$ ``tristochastic'' if the sums on its rows, columns and diagonals are all equal. Here, and in what follows, we call ``diagonals'' the main diagonal, and its $p-1$ translates to the right, obtained by using modulo $p$ indices.

With this notation, here is now the result at $M=3$:

\begin{proposition}
For $p$ prime, the standard form of the dephased PBM at $M=3$ is
$$H=\begin{pmatrix}
1&1&\ldots&1&\ldots&\ldots&1&1&\ldots&1\\
1&1&\ldots&1&\ldots&\ldots&w^{p-1}&w^{p-1}&\ldots&w^{p-1}\\
\underbrace{1}_{A_{11}}&\underbrace{w}_{A_{12}}&\ldots&\underbrace{w^{p-1}}_{A_{1p}}&\ldots&\ldots&\underbrace{1}_{A_{p1}}&\underbrace{w}_{A_{p2}}&\ldots&\underbrace{w^{p-1}}_{A_{pp}}
\end{pmatrix}$$
where $w=e^{2\pi i/p}$ and where $A\in M_p(\mathbb N)$ is tristochastic, with sums $N/p$.
\end{proposition}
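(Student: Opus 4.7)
The plan is to fix the top two rows via the $M=2$ result and then express orthogonality of the third row with each of the other two as a single constancy condition on the multiplicity matrix $A$, closing the argument with Lam-Leung.

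First I would use that $H$ is dephased to force the first row to be all $1$'s, and then apply Proposition 2.3 with $q=p$ (in which case $q/p=1$ and there is a single multiplicity $a_1=N/p$) to the first two rows: their standard form is exactly $p$ consecutive blocks of length $N/p$, carrying the values $1,w,\ldots,w^{p-1}$, as pictured. I would next extend the standardization to the third row block by block, and define $A_{ij}\in\mathbb N$ as the number of positions in the $i$-th block of the second row at which the third row equals $w^{j-1}$. With this encoding $H$ is completely determined by $A$, and the row sums $\sum_j A_{ij}=N/p$ hold automatically, since each block of the second row has length $N/p$.

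It then remains to translate the two remaining orthogonality relations into constancy of the other two sum-families of $A$. Orthogonality of row $3$ with row $1$ reads $\sum_j\bigl(\sum_i A_{ij}\bigr)w^{j-1}=0$, and orthogonality of row $3$ with row $2$ reads $\sum_m\bigl(\sum_{i-j\equiv m}A_{ij}\bigr)w^m=0$, after grouping terms by the residue $m=i-j\bmod p$. Because $q=p$ is prime, Lam-Leung~\cite{lle} forces each such vanishing sum to have constant coefficients, which is precisely the statement that the column sums, respectively the cyclic diagonal sums, of $A$ are independent of their index; together with the total count $N$ this pins each common value at $N/p$. Combined with the row sum condition this is exactly the tristochasticity property defined just before the proposition, and the converse direction is immediate since the three families of sum conditions literally encode the three orthogonality relations.

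The main (and essentially only) obstacle is choosing the indexing so that each orthogonality condition collapses into a clean statement about one family of sums of $A$; once the block/value decomposition is set up, Lam-Leung does all the work. A secondary point to verify is that block-by-block standardization of the third row is compatible with the overall standard form, but this is transparent, since positions within a single block of the second row may be freely permuted and standard form in each block is uniquely determined by the vector $(A_{i1},\ldots,A_{ip})$.
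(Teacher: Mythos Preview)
Your proposal is correct and follows essentially the same approach as the paper: set up the multiplicity matrix $A$, write each pairwise orthogonality as a vanishing sum of $p$-th roots with coefficients given by a family of sums of $A$, and use that for $p$ prime such a vanishing sum forces constant coefficients. The only cosmetic differences are that you absorb the $1\perp 2$ condition into the prior application of Proposition~2.3 (so the row sums come out ``automatically''), whereas the paper lists all three orthogonality conditions on the same footing, and you cite Lam--Leung explicitly where the paper leaves this elementary fact implicit.
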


\begin{proof}
Consider a dephased matrix $H\in M_{3\times N}(\mathbb Z_p)$, written in standard form as in the statement. Then the orthogonality conditions between the rows are as follows:

$1\perp 2$ means $A_{11}+\ldots+A_{1p}=A_{21}+\ldots+A_{2p}=\ldots\ldots=A_{p1}+\ldots+A_{pp}$.

$1\perp 3$ means $A_{11}+\ldots+A_{p1}=A_{12}+\ldots+A_{p2}=\ldots\ldots=A_{1p}+\ldots+A_{pp}$.

$2\perp 3$ means $A_{11}+\ldots+A_{pp}=A_{12}+\ldots+A_{p1}=\ldots\ldots=A_{1p}+\ldots+A_{p,p-1}$.

Thus $A$ must have constant sums on rows, columns and diagonals, as claimed.
\end{proof}

It is quite unobvious on how to deal with the tristochastic matrices with bare hands. For the moment, let us just record a few elementary results:

\begin{proposition}
For $p=2,3$, the standard form of the dephased PBM at $M=3$ is respectively as follows, with $w=e^{2\pi i/3}$ and $a+b+c=N/3$ at $p=3$:
$$H=\begin{pmatrix}+&+&+&+\\+&+&-&-\\\underbrace{+}_{N/4}&\underbrace{-}_{N/4}&\underbrace{+}_{N/4}&\underbrace{-}_{N/4}\end{pmatrix}$$
$$H=\begin{pmatrix}
1&1&1&1&1&1&1&1&1\\
1&1&1&w&w&w&w^2&w^2&w^2\\
\underbrace{1}_a&\underbrace{w}_b&\underbrace{w^2}_c&\underbrace{1}_b&\underbrace{w}_c&\underbrace{w^2}_a&\underbrace{1}_c&\underbrace{w}_a&\underbrace{w^2}_b
\end{pmatrix}$$
Also, for $p\geq 3$ prime and $N\in p\mathbb N$, there is at least one Butson matrix $H\in M_{3\times N}(\mathbb Z_p)$.
\end{proposition}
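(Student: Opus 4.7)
All three assertions will be derived from Proposition 4.1, which identifies dephased PBM at $M=3$ with tristochastic matrices $A\in M_p(\mathbb N)$ having row, column and diagonal sums all equal to $N/p$. The first two parts are pure linear algebra on such small matrices; the third part will be settled by an explicit construction.

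For $p=2$, I would write out the three pairs of conditions ``row sums, column sums, diagonal sums $=N/2$'' on the $2\times 2$ matrix $A$. Subtracting the row versus column equations gives $A_{11}=A_{22}$ and $A_{12}=A_{21}$; then the diagonal condition (or the row sum) forces $A_{11}=N/4$, so every entry equals $N/4$. This reproduces the first displayed matrix.

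For $p=3$, I would normalize $a=A_{11}$, $b=A_{12}$, $c=A_{13}$ with $a+b+c=N/3$, introduce the two extra unknowns $x=A_{31}$, $y=A_{32}$, and propagate the remaining row, column and diagonal equations to express $A_{21},A_{22},A_{23},A_{33}$ in terms of $a,b,c,x,y$. The main-diagonal equation combined with one shifted-diagonal equation then reduces, after using $T=a+b+c=N/3$, to a $2\times 2$ linear system whose unique solution is $x=c$, $y=a$. Substituting back yields the circulant form
$$A=\begin{pmatrix} a & b & c \\ b & c & a \\ c & a & b \end{pmatrix},$$
with the free parameters $a,b,c\in\mathbb N$ satisfying $a+b+c=N/3$, matching the stated $H$ exactly.

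For the existence claim, with $p\geq 3$ prime and $N\in p\mathbb N$, I would exhibit the anti-diagonal matrix
$$A_{ij}=\frac{N}{p}\,\delta_{i+j\equiv 0\,(\mathrm{mod}\,p)},\qquad i,j\in\mathbb Z_p.$$
Each row and each column of $A$ contains exactly one nonzero entry, equal to $N/p$, so the row and column sums are $N/p$. Along the $k$-th diagonal, the surviving condition is $2i+k\equiv 0\pmod p$, which has a unique solution $i\in\mathbb Z_p$ because $p$ is an odd prime and $2$ is therefore invertible modulo $p$; hence every diagonal also sums to $N/p$. By Proposition 4.1 this $A$ assembles into a concrete partial Butson matrix $H\in M_{3\times N}(\mathbb Z_p)$.

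The main obstacle is the $p=3$ linear-algebraic reduction to the circulant ansatz: dimension counting predicts a $2$-parameter family ($9$ unknowns, $7$ independent linear constraints), and one must verify that this family is \emph{exactly} the one spanned by $(a,b,c)$ modulo $a+b+c=N/3$. The anti-diagonal construction is deliberately tailored to odd $p$, since for $p=2$ its main diagonal would sum to $N$ rather than $N/2$; the case $p=2$ is, however, already covered by the first part of the proposition.
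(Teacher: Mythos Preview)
Your proposal is correct and follows essentially the same route as the paper. Both arguments reduce the $p=2,3$ classifications to determining all tristochastic matrices of the given size via Proposition~4.1, and both settle the existence claim by exhibiting an anti-diagonal permutation matrix (the paper uses $A_{ij}=\delta_{i+j\equiv 2\pmod p}$, you use $A_{ij}=(N/p)\,\delta_{i+j\equiv 0\pmod p}$; these differ only by a harmless shift and rescaling, and your justification via invertibility of $2$ modulo the odd prime $p$ is exactly the reason the construction works).

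The only substantive difference is the parametrization at $p=3$: the paper starts from a generic bistochastic matrix with free entries $A_{11},A_{12},A_{21},A_{22}$ and obtains the circulant form from the two independent diagonal equations (their difference gives $A_{12}=A_{21}$, their sum gives $A_{11}+A_{12}+A_{22}=n$), whereas you parametrize by the first row and two entries of the third row and solve a $2\times 2$ system in $x,y$. Both lead to the same two-parameter circulant family, and your dimension count ($9$ unknowns, $7$ independent constraints) is correct.

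One small slip: in the $p=2$ step, the parenthetical ``(or the row sum)'' is not quite right. After deducing $A_{11}=A_{22}$ and $A_{12}=A_{21}$ from the bistochastic conditions, the row sum alone only gives $A_{11}+A_{12}=N/2$; it is the diagonal condition $A_{11}+A_{22}=N/2$ (hence $2A_{11}=N/2$) that forces $A_{11}=N/4$. This does not affect the validity of your argument.
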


\begin{proof}
The $p=2,3$ assertions follow from Proposition 4.1, and from the fact that the $2\times 2$ and $3\times 3$ tristochastic matrices are respectively as follows:
$$A=\begin{pmatrix}a&a\\a&a\end{pmatrix},\qquad
A=\begin{pmatrix}a&b&c\\ b&c&a\\ c&a&b\end{pmatrix}$$

Indeed, the $p=2$ assertion is clear. Regarding now the $p=3$ assertion, consider an arbitary $3\times 3$ bistochastic matrix, written as follows:
$$A=\begin{pmatrix}a&b&n-a-b\\ d&c&n-c-d\\ n-a-d&n-b-c&*\end{pmatrix}$$

Here $*=a+b+c+d-n$, but we won't use this value, because one of the 3 diagonal equations is redundant anyway. With these notations in hand, the conditions are:
$$b+(n-c-d)+(n-a-d)=n$$
$$(n-a-b)+d+(n-b-c)=n$$

Now since substracting these equations gives $b=d$, we obtain the result.

Regarding now the last assertion, consider the following $p\times p$ permutation matrix:
$$A=\begin{pmatrix}
1&&&&\\ 
&&&&1\\
&&&1\\
&&\iddots\\
&1
\end{pmatrix}$$

Since this matrix is tristochastic, for any $p\geq 3$ odd, this gives the result.
\end{proof}

As already mentioned, it is not clear on how to understand the tristochastic matrices at $p\geq 5$. Such matrices make sense of course at any $p\geq 2$, not necessarily prime, so we can try to first study them at $p=4$. But, the situation at $p=4$ is not clear either.

Regarding now the asymptotic count, we have here:

\begin{theorem}
For $p=2,3$, the probability for a randomly chosen $M\in M_{3\times N}(\mathbb Z_p)$, with $N\in p\mathbb N$, $N\to\infty$, to be partial Butson is respectively:
$$P_3^{(2)}\simeq\begin{cases}
\frac{16}{\sqrt{(2\pi N)^3}}&{\rm if}\ N\in4\mathbb N\\
0&{\rm if}\ N\notin 4\mathbb N\end{cases}\qquad\qquad
P_3^{(3)}\simeq\frac{243\sqrt{3}}{(2\pi N)^3}$$
In addition, we have $P_3^{(p)}>0$ for any $N\in p\mathbb N$, for any $p\geq 3$ prime.
\end{theorem}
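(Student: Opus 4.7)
My plan is to handle the three assertions separately.

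\textit{Case $p=2$.} These are the PHM at $M=3$, so the $N \in 4\mathbb{N}$ asymptotic $16/\sqrt{(2\pi N)^3}$ is exactly the $M=3$ part of Proposition 1.3. For $N \notin 4\mathbb{N}$, the standard form in Proposition 1.1 requires entries of multiplicity $N/4 \in \mathbb{N}$, which fails, so no $3 \times N$ PHM exists and $P_3^{(2)} = 0$ exactly.

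\textit{Case $p=3$.} I would combine Proposition 4.1 with the $3 \times 3$ tristochastic classification in Proposition 4.2 to index the dephased PBM in standard form by triples $(a,b,c) \in \mathbb{N}^3$ with $a+b+c = N/3$, via the cyclic matrix $A$ with rows $(a,b,c)$, $(b,c,a)$, $(c,a,b)$. Unwinding the $3^N$-fold dephasing bijection on first rows and the $\binom{N}{N/3,N/3,N/3}$ rearrangements of the sorted second row, and using that each row of $A$ is a permutation of $(a,b,c)$, gives
$$P_3^{(3)} = \frac{1}{3^{2N}} \binom{N}{N/3, N/3, N/3} \sum_{a+b+c = N/3} \binom{N/3}{a,b,c}^3.$$
Applying Stirling to the leading multinomial yields $\simeq 3\sqrt{3} \cdot 3^N / (2\pi N)$, while Lemma 2.4 with $s = p = 3$ and $n = N/3$ estimates the sum as $\simeq 81 \cdot 3^N / (2\pi N)^2$. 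Multiplying these and cancelling $3^{2N}$ produces $243\sqrt{3} / (2\pi N)^3$.

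\textit{Existence at prime $p \geq 3$.} The permutation matrix $A_0$ from the last paragraph of Proposition 4.2 is tristochastic with row, column and diagonal sums all equal to $1$. Hence $(N/p) A_0 \in M_p(\mathbb{N})$ is tristochastic with sums $N/p$ for every $N \in p\mathbb{N}$, and Proposition 4.1 translates this into a partial Butson matrix in $M_{3 \times N}(\mathbb{Z}_p)$, whence $P_3^{(p)} > 0$.

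The step I expect to require the most care is the combinatorial setup for $p = 3$: I need to check that the reduction through the dephasing equivalence (multiplying columns by phases) and the subsequent sorting of the second row contribute exactly the factors $3^N$ and $\binom{N}{N/3, N/3, N/3}$, so that the displayed closed-form probability is correct. Once this is nailed down, the asymptotic analysis is routine, amounting to tracking the exponents of $3$ and $2\pi N$ in Stirling and in Lemma 2.4.
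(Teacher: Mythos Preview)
Your proposal is correct and follows essentially the same route as the paper: for $p=2$ you reduce to the known PHM count (Propositions 1.1 and 1.3), for $p=3$ you derive the closed formula $P_3^{(3)}=\frac{1}{3^{2N}}\binom{N}{N/3,N/3,N/3}\sum_{a+b+c=N/3}\binom{N/3}{a,b,c}^3$ from the tristochastic classification and then apply Stirling plus Lemma 2.4 with $s=p=3$, $n=N/3$, and for $p\geq 3$ you invoke the permutation matrix from Proposition 4.2. The paper's proof differs only cosmetically, writing the $p=3$ count first as $\frac{1}{9^N}\sum\binom{N}{a,b,c,b,c,a,c,a,b}$ before factoring it into your displayed form.
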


\begin{proof}
According to Proposition 4.2, and then to the Stirling formula, we have:
$$P_3^{(2)}=\frac{1}{4^N}\binom{N}{N/4,N/4,N/4,N/4}\simeq\frac{16}{\sqrt{(2\pi N)^3}}$$

Also, by using Proposition 4.2, and then Lemma 2.4 with $s=p=3$, $n=N/3$:
\begin{eqnarray*}
P_3^{(3)}
&=&\frac{1}{9^N}\sum_{a+b+c=N/3}\binom{N}{a,b,c,b,c,a,c,a,b}\\
&=&\frac{1}{3^N}\binom{N}{N/3,N/3,N/3}\times\frac{1}{3^N}\sum_{a+b+c=N/3}\binom{N/3}{a,b,c}^3\\
&\simeq&\frac{3\sqrt{3}}{2\pi N}\cdot\sqrt{\frac{81}{(2\pi N/3)^4}}=\frac{243\sqrt{3}}{(2\pi N)^3}
\end{eqnarray*}

Finally, the last assertion is clear from the last assertion in Proposition 4.2.
\end{proof}

For exponents of type $q=p^k$ with $k\geq 2$, we obtain as well some kind of tristochastic matrices, but with the tristochastic condition taken blockwise. However, finding a good parametrization of these matrices is quite unobvious, even at $q=4$.

\section{Further results, conclusion}

We have the following question, which emerges from the above results:

\begin{question}
Is there any de Launey-Levin type formula for $P_M$, with $M\in\mathbb N$ arbitrary, at least in the case where $q=p$ is prime?
\end{question}

As a first observation, the beginning of the proof in \cite{ll2} applies to the general situation $q\in\mathbb N$. Indeed, by following the idea there, we have:

\begin{theorem}
The probability $P_M$ for a random $H\in M_{M\times N}(\mathbb Z_q)$ to be partial Butson equals the probability for a length $N$ random walk with increments drawn from
$$E=\left\{(e_i\bar{e}_j)_{i<j}\Big|e\in\mathbb Z_q^M\right\}$$
regarded as a subset $\mathbb Z_q^{\binom{M}{2}}$, to return at the origin.
\end{theorem}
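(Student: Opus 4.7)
The plan is to simply unpack the definitions and repackage the orthogonality condition into the form of a random walk condition, following the opening step of the proof in \cite{ll2}.

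First I would observe that choosing $H\in M_{M\times N}(\mathbb Z_q)$ uniformly at random is equivalent to choosing its $N$ columns $e^{(1)},\ldots,e^{(N)}\in\mathbb Z_q^M$ independently and uniformly. This identifies the sample space with $(\mathbb Z_q^M)^N$, equipped with the product uniform law.

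Next I would rewrite the orthogonality conditions column by column. For each pair of rows $i<j$, we have
$$\langle r_i,r_j\rangle=\sum_{k=1}^N H_{ik}\overline{H_{jk}}=\sum_{k=1}^N e^{(k)}_i\overline{e^{(k)}_j},$$
so setting $\phi:\mathbb Z_q^M\to\mathbb Z_q^{\binom{M}{2}}\subset\mathbb C^{\binom{M}{2}}$ by $\phi(e)=(e_i\bar e_j)_{i<j}$, we see that $H$ is partial Butson if and only if $\sum_{k=1}^N\phi(e^{(k)})=0$ in $\mathbb C^{\binom{M}{2}}$, where the sum is taken coordinate-wise as complex numbers. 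This is exactly the statement that the random walk of length $N$ on $\mathbb C^{\binom{M}{2}}$, with i.i.d.\ increments distributed as the pushforward under $\phi$ of the uniform law on $\mathbb Z_q^M$, returns to the origin. Since the support of each increment is by construction the set $E$ of the statement, we conclude that $P_M$ equals the advertised return probability.

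There is no substantive obstacle here, as the argument is just a change of viewpoint: all the content of the theorem lies in the definition of $E$, together with the observation that the orthogonality conditions decouple into one scalar condition per pair of rows. The only mild subtlety worth flagging is that the induced law on $E$ is not uniform in general, since distinct $e\in\mathbb Z_q^M$ may yield the same $\phi(e)$; but this is harmless, as the walk is defined by sampling $e$ uniformly on $\mathbb Z_q^M$ and then applying $\phi$.
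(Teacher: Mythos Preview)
Your proof is correct and follows exactly the same approach as the paper: defining the map $\phi$ (called $T$ there) by $\phi(e)=(e_i\bar e_j)_{i<j}$ and observing that $H=[e^{(1)},\ldots,e^{(N)}]$ is partial Butson if and only if $\sum_k\phi(e^{(k)})=0$. One small correction to your closing remark: the pushforward law on $E$ is in fact uniform, because every fiber of $\phi$ has size exactly $q$ (scaling $e\mapsto ce$ with $c\in\mathbb Z_q$ fixes $\phi(e)$, and conversely $\phi(e)$ together with $e_1$ determines $e$); this does not affect your argument.
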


\begin{proof}
Indeed, with $T(e)=(e_i\bar{e}_j)_{i<j}$, a matrix $X=[e_1,\ldots,e_N]\in M_{M\times N}(\mathbb Z_q)$ is partial Butson if and only if $T(e_1)+\ldots+T(e_N)=0$, and this gives the result.
\end{proof}

Observe now that, according to the above result, we have:
$$P_M
=\frac{1}{q^{(M-1)N}}\#\left\{\xi_1,\ldots,\xi_N\in E\Big|\sum_i\xi_i=0\right\}
=\frac{1}{q^{(M-1)N}}\sum_{\xi_1,\ldots,\xi_N\in E}\delta_{\Sigma\xi_i,0}$$

The problem is to continue the computation in the proof of the inversion formula. More precisely, the next step at $q=2$, which is the key one, is as follows:
$$\delta_{\Sigma\xi_i,0}=\frac{1}{(2\pi)^D}\int_{[-\pi,\pi]^D}e^{i<\lambda,\Sigma\xi_i>}d\lambda$$

Here $D=\binom{M}{2}$. The problem is that this formula works when $\Sigma\xi_i$ is real, as is the case in \cite{ll2}, but not when $\Sigma\xi_i$ is complex, as is the case in Theorem 5.2.

Yet another problem comes from the fact that the exponent $p=2$ used in \cite{ll2} is quite special, because it forces $N\in p^2\mathbb N$, instead of just $N\in p\mathbb N$. Thus, regardless of the above-mentioned real vs. complex issue, the combinatorics in \cite{ll2} is probably not exactly the $p=2$ instance of a ``generic'' combinatorics, because the generic case would probably require the assumption $p\neq 2$. We have no answer so far to these questions.

\end{document}